\numberwithin{equation}{section}      %equation numbering subordinate to chapters\usepackage{enumerate}
\crefname{figure}{Figure}{Figures}
\crefname{algorithm}{Algorithm}{Algorithms}
\crefname{subsection}{Section}{Section}
\crefname{section}{Section}{Section}
\crefname{theorem}{Theorem}{Theorems}
\crefname{lemma}{Lemma}{Lemmata}
\crefname{definition}{Definition}{Definitions}
\crefname{remark}{Remark}{Remarks}
\newcommand{\vdW}{van der Waals}
\newcommand{\Avogadro}{Avogadro}
\newcommand{\vphi}{\ensuremath{ \varphi }}
\newcommand{\setN}{\ensuremath{ \mathbb{N} }}                   
\newcommand{\setR}{\ensuremath{ \mathbb{R} }}                   
\newcommand*{\brac}[1]{\ensuremath{ \left( {#1} \right) }}        %bracket               
\newcommand*{\sqbrac}[1]{\ensuremath{ \left[ {#1} \right] }}      %squared bracket                       
\newcommand*{\cbrac}[1]{\ensuremath{ \left\{ {#1} \right\} }}     %curly bracket                   
\newcommand*{\Hence}{\ensuremath{ ~~\Longrightarrow~~ }}          % long implication arrow with more spacing 
\newcommand*{\Equivalent}{\ensuremath{ ~~\Longleftrightarrow~~ }} % long equivalence arrow with more spacing  
\renewcommand*{\vec}[1]{ \ensuremath{ \boldsymbol{#1} }}  % bold vector: \vec{xx}
\newcommand*{\scp}[2]{\ensuremath{ \left( {#1}~,~{#2} \right) }}                % scalar product:  \scp{x}{y}  
\newcommand*{\dualp}[2]{\ensuremath{ \left\langle {#1}~,~{#2} \right\rangle }}  % dual pairing:   \dualp{x}{y}      
\newcommand*{\grad}{\vec{\nabla}\!}                       % gradient: \grad 
\newcommand*{\dert}[1][t]{\ensuremath{ \partial_{{#1}} }} % temporal derivative: \dert[opt]     
\newcommand*{\derr}[1][x]{\ensuremath{ \frac{d}{d{#1}} }} % derivative on real line: \derr[opt]
\newcommand{\Int}[4]{\ensuremath{ \int\limits_{#2}^{{#3}} {#4} d{#1} }}    % Integral:   \Int{integrand variable}{subscript}{superscript}{function}  
\newcommandx*{\Intdt}[3][1=0,2=T,usedefault]{ \Int{t}{{#1}}{{#2}}{{#3}} }  % Time integral with respect to dt and default interval:  \Intdt[lower border][upper border]{function} 
\newcommandx*{\fspace}[4][1=-1pt,usedefault]{ \ensuremath{  {{#2}} \hspace{#1} \left( {#3}{#4} \right) }} % Function space: \fspace[hspace]{name}{domain}{range}
\newcommandx*{\norm}[2]{ \ensuremath{ \left\| {#1} \right\|_{{#2}}   }}                                   % Norms for functions: \norm{object}{space}           
\newcommandx*{\Lp}[4][1=\Omega,2= ,usedefault]{ \fspace{ L^{{#3}}_{{#2}} }{ {#1} }{ {#4} } } % \Lp[domain][subscript]{exponent p}{range}
\newcommandx*{\Wkp}[5][1=\Omega,2= ,usedefault]{ \fspace{ W^{{#3},{#4}}_{{#2}} }{ {#1} }{ {#5} } } % \Wkp[domain][subscript]{smoothness k}{exponent p}{range}   
\newcommandx*{\Hk}[4][1=\Omega,2= ,usedefault]{ \fspace{ H^{{#3}}_{{#2}} }{ {#1} }{ {#4} } }       % \Hk[domain][subscript]{smoothness k}{range}  
\newcommand{\concentration}{number concentration}
\newcommand{\concentrationMol}{molar concentration}
\newcommand{\massflux}{number mass flux}
\newcommand*{\vecu}{\vec{u}}
\newcommand*{\vecv}{\vec{v}}
\newcommand*{\vecj}{\vec{j}}
\newcommand*{\vecJ}{\vec{J}}
\newcommand*{\vecnu}{\vec{\nu}}
\newcommand*{\vecF}{\vec{F}}
\newcommand*{\OmegaT}[1][T]{\ensuremath{ \Omega_{#1} }} % Parabolic cylinder
\newcommand{\unitConc}{ \sqbrac{m^{-3}} }               % Physical units: number concentration
\newcommand{\unitConcMol}{ \sqbrac{mol\,m^{-3}} }       % Physical units: number concentration
\newcommand{\unitFlux}{ \sqbrac{m^{-2}s^{-1}} }         % Physical units: number mass flux
\newcommand{\unitPDE}{ \sqbrac{m^{-3}s^{-1}} }          % Physical units: PDE for number concentration
\newcommand{\unitPDEMol}{ \sqbrac{mol\,m^{-3}s^{-1}} }  % Physical units: PDE for molar concentration
\newcommand*{\dl}[1][]{\ensuremath{ d_{{#1}} }}            % Parameters: diffusion coefficient
\newcommand{\fieldFlow}{\ensuremath{ \vecu }}              % Velocity: flow field
\newcommand{\nl}[1][]{\ensuremath{ c_{*{#1}} }}            % Number concentration
\newcommand{\nlflux}[1][]{\ensuremath{ \vecJ_{*{#1}} }}    % Number concentration: number mass flux 
\newcommand{\nlfluxrel}[1][]{\ensuremath{ \vecj_{*{#1}} }} % Number concentrations: relative number mass flux 
\newcommand{\cl}[1][]{\ensuremath{ c_{{#1}} }}             % Molar concentrations
\newcommand{\clstart}[1][]{\ensuremath{ c_{0{#1}} }}       % Molar concentrations: initial value 
\newcommand{\Rl}[1][]{\ensuremath{ R_{{#1}} }}             %Reaction rate functions                
\newcommand{\spaceT}{ \Hk[][0]{1}{} }                                                  % concentrations: test function space
\newcommand{\spaceTh}{ V_h }                                                           % concentrations: discrete test function space
\newcommand{\spaceS}{ \fspace{L^2}{I}{; \spaceT } \cap \fspace{H^1}{I}{; \spaceT^* } } % concentrations: solution space
\newcommand{\spaceSshort}{ X }                                                         % concentrations: abbreviation for solution space
\begin{document}
\title{Including \vdW\ Forces in Diffusion-Convection Equations -- Modeling, Analysis, and Numerical Simulations}
\author[1]{Matthias Herz} 
\author[1]{Peter Knabner}
\renewcommand\Affilfont{\itshape\small}
\affil[1]{Department of Mathematics, University Erlangen--Nuremberg, Cauerstr. 11, D-91058 Erlangen, Germany}
\date{\today}
\maketitle 
\begin{abstract}
This paper presents a model of \vdW\ forces in the framework of diffusion--convection equations. 
The model consists of a nonlinear and degenerated diffusion--convection equation, which furthermore can be considered as a model for slow perikinetic coagulation. 
For the analytical investigation, we transform the model to a porous medium equation, which provides us access to the comprehensive analytical results for porous medium equations. 
Additionally, this transformation reveals a new application for porous medium equations. 
Eventually, we present numerical simulations of the model by solving the porous medium equation. 
We note that we solve the porous medium equation without any further regularization, which is often applied in this context.\\[2.0mm]
\textbf{keywords: } \vdW\ forces, \vdW\ equation of state, coagulation, nonlinear diffusion equations, porous medium equation, numerical simulations. 
\end{abstract}
%
%
%%%%%%%%%%%%%%%%%%%%%%%%%%
%  SECTION introduction  %
%%%%%%%%%%%%%%%%%%%%%%%%%%
\section{Introduction}\label{sec:Introduction}
%
% 1. History 
Since the discovery of \vdW\ forces, great efforts have been made in order to capture the physical origin of these forces. Many renowned scientists have contributed toward a better understanding, 
either by developing new ideas, or by carrying over new insights from other areas of physics, see \cite{Hunter-book,Israelachvili-book,Russel-book,Parsegian-book}. 
\par
% 2. Existing models    
One of the most established models of \vdW\ forces is the so-called \vdW\ equation of state. 
This equation contains a cohesion pressure, which originates from ever present attractive \vdW\ forces. 
Besides the \vdW\ equation of state, further \vdW\ force models have been invented and the present-day research on developing such \vdW\ force models 
points into the direction of investigating the quantum nature of \vdW\ forces. Thus, the spatial scales of interest have reached atomistic scales.
\medskip
\par
% 3 Motivation
Although the descriptions of \vdW\ forces has become more sophisticated, researches working on continuum models for reactive transport, 
fluid flow, and elasticity have almost no access to this research at atomistic scales. The reason for this is, that in most cases no 
satisfying method for connecting atomistic models with continuum models exist. In fact, the large field of multiscale modeling tries to 
bridge the gap between different spatial scales. However, connecting in particular atomistic and continuum scales via multiscale modeling 
is still a young and emerging research field, where most of the work needs yet to be done. At the same time, the well-established continuum models 
are usually still the most powerful models for theoretical and computational investigations on macroscopic spatial scales. Nevertheless, it is not 
longer sufficient to solely investigate continuum models in combination with a heuristic description of the effective coefficients. Indeed, the physical processes 
at an atomistic scale have to be incorporated, e.g., for crack modeling and many biological systems. Consequently, one of the most important tasks 
is to find a sound strategy of how to incorporate \vdW\ forces into existing continuum equations. 
\medskip
\par
% 4. Contribution and structure and of this paper
This paper exactly provides such a strategy of including \vdW\ forces. More precisely, in \cref{sec:GovEq}, we derive the model equations. 
Firstly, we show how to recover standard diffusion--convection equations and by taking \vdW\ equation of state into account, we deduce an extended diffusion--convection equation, 
which leads after a transformation to a porous medium equation. In \cref{sec:Ana}, we show global existence for the model and finally, in \cref{sec:Sim}, 
we numerically solve the resulting porous medium equation by a fixed point approach without using any regularization technique.   
%
%%%%%%%%%%%%%%%%%%%%%%%%%%%%%%%%%%
%  SECTION model                  
%%%%%%%%%%%%%%%%%%%%%%%%%%%%%%%%%%
\section{Modeling}\label{sec:GovEq}
%
%%%%%%%%%%%%%%%%%%%%%%%%%%%%%%%%%%%%%%%%
%  SUBSECTION mass continuity equation 
%%%%%%%%%%%%%%%%%%%%%%%%%%%%%%%%%%%%%%%%
\subsection{Basic concepts}
In this section, we consider a given chemical species inside a domain~$\Omega$ and we observe this single chemical species over a certain time interval~$[0,T]$. 
Here, the domain~$\Omega$ is a pure fluid domain. This means in the context of porous media, we are on the pore scale, looking inside a single pore, cf.~\cite[Chapter~1]{bear-book}.
\medskip
\par 
First of all, we briefly derive the equation that governs the kinetics on continuum scales. In contrast to atomistic scales, single particles are not longer resolved on continuum scales.  
Instead, on continuum scales, we simultaneously consider a large number of particles of a given chemical species. This approach leads to averaged kinetics, that are based on mass continuity. 
In order to formulate the mass continuity equation, we introduce commonly used notation:  
\begin{enumerate}[align=left, label=(\roman*), leftmargin=*, topsep=2.0mm, itemsep=-0.7mm]
  \item In a representative elementary volume (REV)~$V$~$[m^{-3}]$, we assume that~$N$ particles of the given single chemical species are present.
	To simultaneously track these particles, we define the \concentration, cf.~\cite[Chaper~6]{Masliyah-book}, by
        \begin{align}\label{eq:defNumberConc}
           \nl:=NV^{-1} \qquad\sim\unitConc 
        \end{align}
        as an averaged quantity. Moreover, in the following, we identify the given chemical species with its \concentration~$\nl$. 
  \item To describe the average movement of the chemical species~$\nl$, we suppose that the concentration~$\nl$ moves with the averaged velocity field $\vecv$. 
	We now define for the chemical species~$\nl$ the corresponding \massflux~$\nlflux$~$[m^{-2}s^{-1}]$, cf.~\cite[Chapter~2]{oden-book}, by $\nlflux := \nl\vecv$~$\sim[m^{-2}s^{-1}]$. 
  \item We assume the particles of the given chemical species move inside a fluid of velocity~$\fieldFlow$~$[ms^{-1}]$. 
        Hence, each and every of the particles is transported at least partially due to convection and $\nlflux$ contains a 
        convection term~$\nl\fieldFlow$~$[m^{-2}s^{-1}]$. Thus, the relative movement of the chemical species~$\nl$ with respect to the 
        fluid flow field~$\fieldFlow$ is described by the so-called drift \massflux~$\nlfluxrel:=\nlflux -\nl\fieldFlow$~$\unitFlux$.
\end{enumerate}
With the just defined quantities, we now formulate the mass continuity equation, which is given in the case of nonreactive mass transport, cf.~\cite[Chapter~2]{oden-book}, by
\begin{alignat}{2}\label{eq:massBalance-1}
%		      & \dert \nl + \grad\cdot \nlflux ~=~ 0 \qquad                                &\text{in }&\Omega \qquad \sim \unitPDE \nonumber\\[0.2cm]
%    \Equivalent\qquad & 
\dert \nl + \grad\cdot\sqbrac{ \nl\fieldFlow + \nlfluxrel} ~=~0 \qquad     &\text{in }&\Omega \qquad \sim \unitPDE ~ .
\end{alignat}
%
%%%%%%%%%%%%%%%%%%%%%%%%%%%%%%%%%%
%  SUBSECTION chemical potential 
%%%%%%%%%%%%%%%%%%%%%%%%%%%%%%%%%%
\subsection{Mass balance equation}
We now assume, that a given driving force $\vecF$~$[N]$ generates the drift \massflux~$\nlfluxrel$. By multiplying this force~$\vecF$ by the \concentration~$\nl$, 
we obtain the corresponding body force density $\nl\vecF$~$[Nm^{-3}]$. 
\par
However, each chemical species has only a limited capability to react to a body force density in the sense that the magnitude of the induced particle movement is limited. 
This limitation is described by the so called mobility~$\omega$~$[s/kg=m/(Ns)]$ of the chemical species \cite[Chapter~6]{Masliyah-book}.
Hence, the induced drift \massflux\ and its generating body force density are proportional to each other and the constant of 
proportionality is given by the mobility~$\omega$, cf. \cite[Chapter 6]{Masliyah-book}. According to Einstein--Smoluchowski relation, 
cf. \cite[Chapter~6]{Masliyah-book}, we express the mobility~$\omega$ in terms of the Boltzmann constant~$k_b$~$[JK^{-1}]$, 
the temperature~$T$~$[K]$, and the diffusivity~$d$~$[m^{2}s^{-1}]$, i.e., we have $\omega ~=~ d(k_bT)^{-1}$~$[mN^{-1}s^{-1}]$~. 
Thus, in our case, the body force density $\nl\vecF$ leads to the drift mass flux~$\nlfluxrel$, which is given by
\begin{align}\label{eq:defDriftMassFlux}
 \nlfluxrel  ~=~  \omega \nl \vecF  ~=~ \frac{d}{k_bT} \nl\vecF \qquad\sim[m^{-2}s^{-1}]~.
\end{align}
Consequently, the mass continuity equation \eqref{eq:massBalance-1} now reads as
\begin{align}\label{eq:massBalance-2}
  \dert \nl + \grad\cdot\sqbrac{ \nl\fieldFlow + \frac{d}{k_bT} \nl\vecF } ~=~0 \qquad   &\text{in }\Omega \qquad\sim\unitPDE ~ .
\end{align}
%
%%%%%%%%%%%%%%%%%%%%%%%%%%%%%%%%%%%%%%%%%
%  SUBSECTION drift mass flux           %
%%%%%%%%%%%%%%%%%%%%%%%%%%%%%%%%%%%%%%%%%
\subsection{Modeling the drift mass flux}
We suppose that the drift movement of the particles of the \concentration~$\nl$ is induced only by the partial pressure~$p$~$[Nm^{-2}]$, 
caused by the collisions with the other particles. Here, the partial pressure~$p$ is the higher, 
the more the particles collide with each other. Hence, the particles move from regions of high pressure to regions of low pressure, 
since the frequent collision within the high pressure regions \enquote{pushes the particles away}. Whereas in the low pressure regions, the rare collision permit the
particles to stay. As a consequence, the particles move down the pressure gradient and the driving force is given, cf.~\cite[Chapter~2]{Castellanos-book}, by   
\begin{align*}
  \vecF ~:=~  -\frac{1}{\nl} \grad p  \quad \sim [N]~.
\end{align*}
Hence, we see from equation~\eqref{eq:defDriftMassFlux} that the corresponding drift \massflux~$\nlfluxrel$ is given by 
\begin{align*}
  \nlfluxrel = - \frac{d}{k_bT} \grad p \qquad[m^{-2}s^{-1}]~.
\end{align*}
Inserting this ansatz for the drift \massflux~$\nlfluxrel$into equation~\eqref{eq:massBalance-2}, leads to the mass balance equation
\begin{align}\label{eq:massBalance-3}
  \dert \nl + \grad\cdot\sqbrac{ \nl\fieldFlow - \frac{d}{k_bT} \grad p } ~=~0 \qquad   &\text{in }\Omega \qquad\sim\unitPDE ~ .
\end{align} 
%
%%%%%%%%%%%%%%%%%%%%%%%%%%%%%%%%%%%%%%%%%%%
% SUBSECTION classical diffusion equation %
%%%%%%%%%%%%%%%%%%%%%%%%%%%%%%%%%%%%%%%%%%%
\subsection{Classical diffusion--convection equation}\label{subsec:ClassicalModel}
Provided that the collisions between the particles of the \concentration~$\nl$ are purely elastic collisions, 
the particles solely transfer their momenta during the collisions, cf.~\cite[Chapter~1]{atkins-book}. Consequently, 
the particles do not interact during the collisions through any kind of pair interaction. Since the ideal gas law 
is based exactly on this noninteracting assumption \cite[Chapter 1]{atkins-book}, we express the pressure~$p$ by means of the ideal gas law, i.e., we obtain
\begin{align}\label{eq:idealGasLaw}
 pV = k_b N T ~~\sim[J] \quad\Equivalent\quad p = \frac{N}{V}k_b T =~ \nl k_b T ~~\sim [Nm^{-2}]~.
\end{align}
Substituting this ansatz for the pressure into equation~\eqref{eq:massBalance-3}, we arrive at the classical diffusion--convection equation
\begin{align}\label{eq:massBalance-4}
  \dert \nl + \grad\cdot\sqbrac{ \nl\fieldFlow - d \grad \nl } ~=~0 \qquad   &\text{in }\Omega \qquad\sim\unitPDE ~ .
\end{align}
In the preceding equation, the term $-d\grad\nl$ is known as the Fickian diffusion term \cite[Chapter 2]{Probstein-book}. This term models on continuum scales exactly the 
kinetics that are induced by the random collision between the particles on atomistic scales. Our derivation illustrates that Fickian diffusion is based on the assumption
of elastic collisions without any further involved pair interactions. 
%
%%%%%%%%%%%%%%%%%%%%%%%%%%%%%%%%%%%%%%%%%%
% SUBSECTION extended diffusion equation %
%%%%%%%%%%%%%%%%%%%%%%%%%%%%%%%%%%%%%%%%%%
\subsection{An diffusion--convection equation including \vdW\ interactions} 
Henceforth, we assume that the particles of the \concentration~$\nl$ interact during the collision process through \vdW\ interactions. 
Since \vdW\ interactions between particles of the same chemical species are always attractive, \vdW\ interactions may keep the 
particles together after a collision and thus the particles may stick together and build up agglomerates. 
Attractive interactions of this type are known as cohesion. Cohesion forces are included in the \vdW\ equation of state, cf. \cite[Chapter 1]{atkins-book,Parsegian-book}
\begin{align*}
 \brac{p + \frac{a}{N_A^2}\frac{N^2}{V^2}}\;\brac{V-\frac{Nb}{N_A}}= N k_b T  \qquad~\sim[J] ~.
\end{align*}
Here, $N_A$~$[mol^{-1}]$ is the \Avogadro\ constant and $b$~$[m^3mol^{-1}]$ is the average volume of a particles. The parameter $a$~$[Nm^4mol^{-2}]$ is the cohesion coefficient, 
which is in our case always positive, cf. \cite[Chapter 1]{atkins-book,Parsegian-book}. The cohesion coefficient~$a$ is a measure for the strength of the involved \vdW\ forces. 
The above \vdW\ equation of state is the crucial linking point for including \vdW\ forces in the mass continuity equation~\eqref{eq:massBalance-3}. 
\par
First, we impose the simplifying assumption, that the volume of the particle is of negligible order of magnitude, i.e., $b \approx 0$. 
Hence, we are able to simplify the \vdW\ equation of state. Second, we include \vdW\ interactions in equation~\eqref{eq:massBalance-3}, 
by expressing the pressure~$p$ by means of the simplified \vdW\ equation of state. Thereby we obtain together with equation~\eqref{eq:defNumberConc} 
\begin{align}\label{eq:vdWgaslaw}
%\brac{p + \frac{a}{N_A^2}\frac{N^2}{V^2}}\;V = N k_b T  ~~~\sim[J]
% \qquad\Equivalent\qquad  
p = k_b T \nl  - \frac{a}{N_A^2}\;\nl^2 ~~~\sim\sqbrac{Nm^{-2}} ~.     
\end{align}
We now substitute the ansatz~\eqref{eq:vdWgaslaw} for the pressure into equation~\eqref{eq:massBalance-3}. This leads us to the mass balance equation 
\begin{flalign}\label{eq:massBalance-5-nl}
&\dert \nl  + \grad \cdot \brac{ \nl\vecu - \dl\grad\nl +\frac{2a\dl}{N_A^2k_bT}\;\nl \grad\nl} ~=~ 0 \quad\text{in }\Omega  \quad\sim\unitPDE~.&
\end{flalign} 
This mass balance equation is the model equation, that includes \vdW forces into the framework of diffusion--convection equations. 
Furthermore, the preceding equation is an extension of the standard linear diffusion--convection equation, since compared to equation~\eqref{eq:massBalance-4}, we have an 
additional term in the mass flux. Exactly this additional term is the reason why the above equation is a nonlinear mass balance equation. 
\medskip
\par
However, depending on $d$, $a$, $T$, the quotient~$2da(N^2_Ak_bT)^{-1}$ can be very small, as it is $N_A^2k_b\sim 10^{23}$. 
To avoid the strong influence of the product $N_A^2k_b$, we scale the above equation by introducing the \concentrationMol~$\cl$
\begin{align}\label{eq:defMolarConc}
 \cl:=N(N_AV)^{-1} ~~\sim\unitConcMol \quad\Equivalent\quad  N_A \cl = \nl ~~\sim\unitConc~.
\end{align}
Substituting the \concentrationMol~$\cl$ into equation \eqref{eq:massBalance-5-nl} and dividing by $N_A$, yields the mass balance equation for the \concentrationMol\
\begin{flalign}\label{eq:massBalance-5}
& \dert \cl + \grad \cdot \brac{\cl\vecu - \dl\sqbrac{1 - \frac{2a}{N_Ak_bT}\;\cl} \grad\cl} = 0 \quad\text{in }\Omega  \quad\sim\unitPDEMol~.&
\end{flalign}
Here, the quotient~$2da(N_Ak_bT)^{-1}$ is only mildly influenced by the product $k_bN_A$, as $N_Ak_b\sim 10^1$.
%
%%%%%%%%%%%%%%%%%%%%%%%%%%%%%%
% SUBSECTION dilute systems  %
%%%%%%%%%%%%%%%%%%%%%%%%%%%%%%
\subsection{Discussion of the extended diffusion--convection equation}\label{subsec:Interpretation}
\textbf{Kinetics: } the derivation of equation~\eqref{eq:massBalance-5} leads to overall collision kinetics, which comprise of the following two competing processes 
\begin{enumerate}[itemsep=-0.5mm, topsep=0.5mm]
\item The particles of the considered chemical species~$\cl$ transfer their momenta during collisions. In case of purely elastic collisions, 
      this is the only interaction between the particles, which is in equation~\eqref{eq:massBalance-5} 
      described by the classical Fickian diffusion term $-\dl\grad\cl$. This term leads to particle spreading.
\item The particles of the chemical species~$\cl$ interact due to \vdW\ interactions during collisions. 
      In absence of any other interactions, the attractive \vdW\ interactions keep the particles together and lead immediately to aggregation. 
      We modeled this in equation~\eqref{eq:massBalance-5} by the term $-2\dl a\cl/(N_Ak_bT) \grad\cl$, which we call the cohesion term in the following. 
      This term leads to particle aggregation.
\end{enumerate}
\medskip
\par\noindent
\textbf{Connection to perikinetic coagulation: } in colloid science, the aggregation of particles is called coagulation and especially coagulation 
induced by diffusion is named perikinetic coagulation, see \cite[Chapter~1.6,12.8]{Hunter-book}. Perikinetic coagulation occurs when particles collide 
due to Brownian motion and attractive interactions keep the particles after this collisions together. The process of perikinetic coagulation can be formulated 
in terms of a reaction rate, in which only particles of the same chemical species are involved. This reaction rate~$\Rl[p]$ is commonly formulated in radially 
symmetrical situations, see \cite[Equation~(12.8.6)]{Hunter-book}. Transforming this rate function~$\Rl[p]$ to non-radially symmetrical situations, we obtain 
\begin{align*}
 \Rl[p](\cl) ~=~ -\alpha\dl\Delta(\cl^2) ~=~ -2\alpha \dl \grad\cdot(\cl \grad\cl) \qquad\sim\unitPDEMol~.
\end{align*}
The interpretation of the above reaction rate is as follows: the factor $-\dl\grad\cl$ describes the Fickian diffusion process, 
which causes the particles to collide with each other, i.e., this term brings the particles into contact. The \concentrationMol~$\cl$ measures how many 
particles are present at a given point in space. Thus, the factor $-\dl\cl\grad\cl$ describes how many particles collide with each other at this point.
Taking the divergence of $-\dl\cl\grad\cl$  yields a source or sink term $-\grad\cdot(\cl \grad\cl)$, which is a measure for mass production/destruction during the just
illustrated process at a given point in space. We take exactly this source or sink term as the reaction rate and the factor $2\alpha$ as the reaction rate constant. 
Moreover, by choosing $\alpha=a(N_Ak_bT)^{-1}$, we arrive at the diffusion--convection-coagulation model (units:\unitPDEMol)
\begin{align*}
              & \dert \cl + \grad\cdot\sqbrac{\cl\fieldFlow - \dl \grad\cl} = \Rl[p](\cl)                              & \text{in }\Omega~, \\[2.0mm]
%\Equivalent~  & \dert \cl + \grad\cdot\sqbrac{\cl\fieldFlow - \dl \grad\cl} =  -a\dl(N_Ak_bT)^{-1}\Delta(\cl^2)        & \text{in }\Omega~, \\[2.0mm]
\Equivalent~  & \dert \cl + \grad \cdot \brac{\cl\fieldFlow - \dl\sqbrac{1 - \frac{2a}{N_Ak_bT}\;\cl} \grad\cl}=0      & \text{in }\Omega~.
\end{align*}
This demonstrates, that we can interpret the mass balance equation~\eqref{eq:massBalance-5} as a diffusion--convection-coagulation model in case of perikinetic coagulation.       
\medskip
\par\noindent
\textbf{The nonlinear diffusion coefficient: } 
equation~\eqref{eq:massBalance-5} is a nonlinear diffusion--convection equation, where the nonlinearity stems from the nonlinear diffusion term
\begin{align}\label{eq:defNonlinDiffCoef}
  D(\cl) \grad\cl \qquad \text{with }\qquad  D(\cl):=\dl\sqbrac{1- \frac{2a}{N_Ak_bT}\; \cl } \qquad\sim\sqbrac{m^2 s^{-1} }~.
\end{align}
Mathematically, $D(\cl)$ is a nonlinear diffusion coefficient, which arises from the interplay between Fickian diffusion and cohesion forces. 
Since the Fickian diffusion and cohesion forces describe two competing processes, we have to distinguish the following two cases:
\begin{enumerate}[itemsep=-0.2mm, topsep=0.5mm]
  \item In case of a dominant Fickian diffusion term~$-\dl\grad\cl$, only a small amount of particles form aggregates after collisions. 
        Here, the overall kinetics is spreading and we have a nonnegative nonlinear diffusion coefficient~$D(\cl)\geq 0$.
        \hspace{1.0mm}
  \item In case of a dominant cohesion term~$2\dl a\cl/(N_Ak_bT) \grad\cl$, most of the particles form agglomerates after collisions due to strong cohesion forces. 
        Therefore, the overall kinetics is coagulation. Here, we have a negative nonlinear diffusion coefficient~$D(\cl) ~<~0$.
\end{enumerate}     
Thus, exactly in case of dominant cohesion, the mathematical model~\eqref{eq:massBalance-5} and its reformulation~\eqref{eq:massBalance-5-reformulated} become meaningless, 
as we have a negative nonlinear diffusion coefficient~$D(\cl)$.%
\footnote{We call a mathematical model meaningful in case it possesses weak solutions in the sense of \cref{def:WeakSolution}. 
Generally any (non)linear diffusion--convection equation is only meaningful in case the (non)linear diffusion coefficient~$D(\cl)$ is nonnegative, i.e., $D(\cl)\geq0$.} 
The physical reason behind this is that in case of dominant cohesion the particles coagulate to such a large extent that this is equivalent 
to a phase transition from a dissolved phase to a solid phase.
We note that the model equation~\eqref{eq:massBalance-5} is formulated as an averaged equation for the \concentrationMol~$\cl$, 
which does not account for the physical state of the particles. Nevertheless, the presented model is able to resolve phase transitions in the sense 
that the nonlinear diffusion coefficient~$D(\cl)$ becomes negative exactly in such situations.
\medskip
\par 
\noindent\textbf{The cohesion coefficient: } 
dominant cohesion forces occur in a solution only in a supersaturated situation. 
To account for this, we denote by $\cl^\ast$ the value of the concentration in case of equilibrium solubility. Thus, we have 
supersaturation in case of $\cl>\cl^\ast$ and undersaturation in case of $\cl<\cl^\ast$. Determining the cohesion coefficient~$a$ by
\begin{align*}
  a := \frac{N_Ak_bT}{2\cl^\ast}  \qquad\Equivalent\qquad \frac{2a}{N_Ak_bT} = \frac{1}{\cl^\ast} ~\\[-7.0mm]
\end{align*}
and substituting this relation in equation~\eqref{eq:massBalance-5}, leads to 
\begin{align}\label{eq:massBalance-5-reformulated}
  \dert \cl + \grad \cdot \brac{\cl\fieldFlow - \dl\sqbrac{1 - \frac{1}{\cl^\ast}\;\cl} \grad\cl} ~=~ 0 \qquad \sim \unitPDEMol~.
\end{align}
This reformulation clearly shows, that dominant cohesion occurs, if $\cl>\cl^\ast$ which is now equivalent to $D(\cl)<0$. 
Hence, we have scaled the model such that $D(\cl)<0$ solely occurs in supersaturated situations.
%
%%%%%%%%%%%%%%%%%%%%%%%%%
%  SECTION Analysis     %
%%%%%%%%%%%%%%%%%%%%%%%%%
\section{Analysis}\label{sec:Ana}
%
%%%%%%%%%%%%%%%%%%%%%%%%%%%%%%%%
% SUBSECTION weak formulation  %
%%%%%%%%%%%%%%%%%%%%%%%%%%%%%%%%
\subsection{Weak formulation of the nonlinear model}\label{subsec:weakModel}
To present the weak formulation of the nonlinear model, we firstly introduce some notation.
\begin{enumerate}[align=left, label=({N}\arabic*), ref=({N}\arabic*), itemsep=-1.2mm]
 \item For $n\in\{1,2,3\}$, let $\Omega\in\setR^n$ be a $n$-dimensional bounded domain with boundary $\partial\Omega$ and
       corresponding exterior normal field~$\vecnu$.  
       Next, let $I:=(0,T)$ be a time interval and we introduce by $\OmegaT:= I\times\Omega$ 
       a time space cylinder with lateral boundary~$\partial\OmegaT:=I\times\partial\Omega$.  \label{NotGeom}
 \item For $1\leq p\leq\infty$, we denote the Lebesgue spaces for real valued and vector valued functions by 
       $\Lp{p}{}$, and the Sobolev spaces by $\Wkp{1}{p}{}$, cf. \cite{Adams2-book}. 
       Especially, we set $\Hk{1}{}:=\Wkp{1}{2}{}$ and $H^1_0(\Omega):=W^{1,2}_0(\Omega)$. 
       Here, the subscript $\textsubscript{0}$ denotes the functions with vanishing traces, cf. \cite{Adams2-book}. \label{NotSpace} 
 \item For a given Banach space~$V$, we refer for the definition of the Bochner spaces~$\fspace{L^p}{I}{;V}$ 
       and $\fspace{H^k}{I}{;V}$ to \cite{Evans-book} and \cite{Roubicek-book}.\label{NotBochnerSpace}
 \item We denote by $\scp{\cdot}{\cdot}_H$ the inner product on a Hilbert space~$H$ 
       and by $\dualp{\cdot}{\cdot}_{V^\ast\times V}$, the dual pairing between a 
       Banach space~$V$ and its dual space~$V^\ast$. 
       On $\setR^n$, we just write $\vecv\cdot\vecu:=\scp{\vecv}{\vecu}_{\setR^n}$.\label{NotProd}
 \item By $\spaceSshort:=\fspace{L^\infty}{I}{;\Lp{2}{}}\cap\spaceS$, we denote the solution space.  \label{NotSolutionSpace}
\end{enumerate}
We equip equation~\eqref{eq:massBalance-5}  with initial conditions and boundary conditions and define the following mathematical model:
\medskip
\par
\begin{subequations}\label{eq:strongModel}
\noindent\textbf{Mathematical model:}
\begin{align}
    \dert\cl + \grad \cdot \brac{\cl\fieldFlow -\dl\sqbrac{1 - \frac{2a \cl}{N_Ak_bT}}  \grad\cl } &= 0        &  \text{ in }  & \OmegaT,                \label{eq:strongModel-a}\\
                                                                                            \cl    &= 0        &  \text{ on }  & \partial\OmegaT,        \label{eq:strongModel-b}\\
                                                                                            \cl(0) &= \clstart &  \text{ on }  & \Omega\times\cbrac{0}.  \label{eq:strongModel-c}
\end{align} 
\end{subequations}
We multiply by a test function $\vphi\in \spaceT$, integrate by parts and arrive at the weak formulation:
\begin{definition}[Weak solution]\label{def:WeakSolution}%
 We call a function~$c\in\spaceSshort$, with $\spaceSshort$ from \ref{NotSolutionSpace},
 a weak solution of equations~\eqref{eq:strongModel-a}--\eqref{eq:strongModel-c}, iff for a.e.~$t\in I$ and $\forall~ \vphi \in \spaceT$
 \begin{flalign}\label{eq:weakModel}
   & \dualp{\dert\cl}{\vphi}_{\Hk{1}{}^\ast\times\Hk{1}{}} + \scp{\dl\sqbrac{1 - \frac{2a \cl}{N_Ak_bT}}  \grad\cl -\cl\fieldFlow }{\grad\vphi}_{\Lp{2}{}} = 0~.&
 \end{flalign}
\hfill$\square$
\end{definition}
\par
To successfully examine the above model, we introduce the following structural assumptions. 
\begin{enumerate}[align=left, label=({A}\arabic*), ref=({A}\arabic*),  itemsep=-1.0mm, topsep=-0.5mm]
 \item We assume $\dl>0$ and $a>0$ for the diffusion coefficient and cohesion coefficient. \label{AssumpCoeff}
 \item We assume for the initial data $\clstart \in \Lp{\infty}{}$ with $0\leq\clstart\leq {N_Ak_bT}(2a)^{-1}$. \label{AssumpStart}
 \item We assume for the velocity field $\fieldFlow\in\Lp[\OmegaT]{2}{}$ and that $\grad\cdot\fieldFlow = 0$ for a.e. $x\in\Omega$, a.e. $t \in [0,T]$. \label{AssumpConvection}
\end{enumerate}
Next, we prove that equation~\eqref{eq:weakModel} possesses the physical property of producing nonnegative solutions. 
\begin{lemma}\label{lem:nonneg}
Let $\cl\in\spaceSshort$ be a weak solution according to \cref{def:WeakSolution} and assume \ref{AssumpCoeff}--\ref{AssumpConvection}. Then $\cl$ is nonnegative.
\end{lemma}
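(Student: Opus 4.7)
The plan is to test the weak formulation~\eqref{eq:weakModel} with the admissible function $\vphi = -\cl^-$, where $\cl^- := \max\{-\cl,0\}$ denotes the negative part. Since $\cl \in \spaceSshort$ has vanishing trace on $\partial\Omega$, the truncation $\cl^-$ also lies in $\spaceT$ for a.e.\ $t \in I$, so this choice is legitimate. Applying the chain rule for Bochner spaces to the convex primitive $s \mapsto \tfrac{1}{2}(s^-)^2$ yields the identity
\[
  \dualp{\dert\cl}{-\cl^-}_{\Hk{1}{}^\ast\times\Hk{1}{}} = \frac{1}{2}\frac{d}{dt}\norm{\cl^-(t)}{\Lp{2}{}}^2,
\]
which I will combine with pointwise sign information on the level set $\{\cl<0\}$.

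Next I would evaluate the two spatial integrals on $\{\cl<0\}$, where $\cl=-\cl^-$ and $\grad\cl=-\grad\cl^-$ almost everywhere. The diffusive contribution becomes
\[
  \scp{\dl\sqbrac{1 - \frac{2a\cl}{N_Ak_bT}}\grad\cl}{\grad(-\cl^-)}_{\Lp{2}{}} = \int_\Omega \dl\sqbrac{1 + \frac{2a\cl^-}{N_Ak_bT}}\abs{\grad\cl^-}^2\,dx \geq \dl\norm{\grad\cl^-}{\Lp{2}{}}^2,
\]
by positivity of $\dl$ and $a$ from~\ref{AssumpCoeff}. The key observation here is that precisely where $\cl$ turns negative the bracket $1-2a\cl/(N_Ak_bT)$ exceeds $1$, so the diffusion operator remains uniformly elliptic in $\cl^-$. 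The convective contribution vanishes after one integration by parts: using $\cl=-\cl^-$ on the support, the boundary behaviour $\cl^- \in \Hk[][0]{1}{}$, and the incompressibility $\grad\cdot\fieldFlow=0$ from~\ref{AssumpConvection},
\[
  -\scp{\cl\fieldFlow}{\grad(-\cl^-)}_{\Lp{2}{}} = -\frac{1}{2}\int_\Omega \fieldFlow\cdot\grad(\cl^-)^2\,dx = \frac{1}{2}\int_\Omega(\grad\cdot\fieldFlow)(\cl^-)^2\,dx = 0.
\]

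Combining these ingredients, equation~\eqref{eq:weakModel} collapses to the differential inequality
\[
  \frac{1}{2}\frac{d}{dt}\norm{\cl^-(t)}{\Lp{2}{}}^2 + \dl\norm{\grad\cl^-(t)}{\Lp{2}{}}^2 \leq 0,
\]
so that $t\mapsto\norm{\cl^-(t)}{\Lp{2}{}}^2$ is nonincreasing. Assumption~\ref{AssumpStart} ensures $\clstart\geq 0$, hence $\cl^-(0)=0$ in $\Lp{2}{}$, and I conclude $\cl^-(t)=0$ for a.e.\ $t\in I$, i.e., $\cl\geq 0$.

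The step I expect to require the most care is the rigorous justification of the chain-rule identity for the nonsmooth truncation $s\mapsto(s^-)^2$, since the abstract regularity $\cl\in\spaceS$ only supplies the classical chain rule for the $\Lp{2}{}$-norm itself. The standard workaround is to regularize the primitive by a family of smooth convex functions $\Phi_\varepsilon$ with $\Phi_\varepsilon \to \tfrac{1}{2}(s^-)^2$ and $\Phi_\varepsilon'\to -(\cdot)^-$, apply the classical Bochner chain rule to each $\Phi_\varepsilon(\cl)$, and pass to the limit $\varepsilon\to 0$ via dominated convergence, exploiting the $\fspace{L^\infty}{I}{;\Lp{2}{}}$ bound built into the solution space $\spaceSshort$.
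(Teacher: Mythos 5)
Your proof is correct and takes essentially the same route as the paper: the paper tests with $\vphi=\cl[-]:=\min(\cl,0)$, which is exactly your $-\cl^-$, identifies the time term with $\tfrac12\derr[t]\norm{\cl[-]}{\Lp{2}{}}^2$, notes the diffusion integral is nonnegative because the bracket exceeds $1$ where $\cl<0$, removes the convection term via \ref{AssumpConvection}, and integrates in time using \ref{AssumpStart}. The only difference is that you make explicit the regularization argument justifying the chain rule for the truncation, which the paper leaves implicit.
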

\begin{proof}
We test equation~\eqref{eq:weakModel} with $\vphi=\cl[-]:=\min(\cl,0)$ and obtain for the time integral
\begin{align*}
 \dualp{\dert \cl}{\cl[-]}_{ \spaceT^\ast\times\spaceT } ~=~ \frac{1}{2} \derr[t] \norm{ \cl[-]}{\Lp{2}{}}^2~. 
\end{align*}
The convection integral vanishes with integration by parts and \ref{AssumpConvection}.
For the diffusion integral, we arrive together with \ref{AssumpCoeff} at 
\begin{align*}
     \scp{\dl\sqbrac{1 - 2a(N_Ak_bT)^{-1} c}  \grad\cl}{\grad \cl[-]}_{\Lp{2}{}} 
%~=~ \norm{ \sqrt{\dl+ 2a\dl(N_Ak_bT)^{-1} \abs{\cl[-]} }~~\grad\cl[-] }{ \Lp{2}{} }^2 
 ~\geq~ 0~.
\end{align*}
We now integrate in time over $(0,t)$, for some $t\in I$, and reach with \ref{AssumpStart} to
\begin{align*}
 \norm{ \cl[-](t)}{\Lp{2}{}}^2 ~\leq~ \norm{ \clstart[,-]}{\Lp{2}{}}^2 ~=~ 0 \qquad \Hence~ \cl[-](t)=0 ~\text{ for a.e }t\in I~.
\end{align*}
\end{proof}
%
%%%%%%%%%%%%%%%%%%%%%%%%%%%%%%%%%%
% SUBSECTION connection to PME   %
%%%%%%%%%%%%%%%%%%%%%%%%%%%%%%%%%%
\subsection{Connection to the porous medium equation}\label{subsec:pme}
In this section, we show that the derived model can be transformed to a porous medium equation. However, we note that this model is not the first aggregation model 
with the structure of a porous medium equation. More precisely, \cite{BurgerEtAl-Aggration-2009, burger_longtime_aggr, capasso_aggr} already provided a different model for aggregation with 
the structure of a porous medium equation.
\medskip
\par\noindent
\textbf{Connection to classical porous medium equations: }
Henceforth, we assume a vanishing fluid flow in equations \eqref{eq:strongModel-a}--\eqref{eq:strongModel-c}, i.e. $\fieldFlow=\vec{0}$. Next, we introduce the new variable
\begin{align}\label{eq:solutionTranformed}
\hat{\cl}:= \frac{\dl}{2}\sqbrac{1 - \frac{2a}{N_Ak_bT}\;\cl}~,
\end{align}
and we calculate the derivatives 
\begin{align*}
 \dert \hat{\cl}= - \frac{a\dl}{N_Ak_bT}\; \dert\cl \qquad\text{and} \qquad \grad\hat{\cl}=-\frac{a\dl}{N_Ak_bT}\;\grad\cl~.
\end{align*}
Furthermore, we multiply equation~\eqref{eq:strongModel-a} by $-a\dl(N_Ak_bT)^{-1}$ and thereby we obtain
\begin{flalign*}
&-\frac{a\dl}{N_Ak_bT} \; \sqbrac{\dert\cl -\grad\cdot\brac{\dl\sqbrac{1 - \frac{2a}{N_Ak_bT}\cl} \grad\cl }}=0  
 ~~\Equivalent~  
 \dert\hat{\cl}- \Delta(\hat{\cl})^2=0~.&
\end{flalign*} 
Hence, we have transformed the model \eqref{eq:strongModel-a}-\eqref{eq:strongModel-c} in case of $\fieldFlow=0$ to the following porous medium equation
\medskip
\par\noindent
\begin{subequations}\label{eq:strongModel-pme}
\textbf{Transformed mathematical model:}
\begin{align}
    \dert\hat{\cl} + \Delta(\hat{\cl})^2  &= 0                                                      &  \text{ in }  & \OmegaT,                \label{eq:strongModel-pme-a}\\
                               \hat{\cl}  &= 2^{-1}\dl                                              &  \text{ on }  & \partial\OmegaT,        \label{eq:strongModel-pme-b}\\
                            \hat{\cl}(0)  &= \frac{\dl}{2}\sqbrac{1 - \frac{2a}{N_Ak_bT}\;\clstart} &  \text{ on }  & \Omega\times\cbrac{0}.  \label{eq:strongModel-pme-c}
\end{align} 
\end{subequations}
This transformation provides us access to the comprehensive results of \cite{Vazquez-book} for porous medium equations. 
In particular, in our case of a constant nonnegative Dirichlet boundary condition~\eqref{eq:strongModel-pme-b} and nonnegative initial values~\eqref{eq:strongModel-pme-c}, 
see \ref{AssumpStart}, we know from \cite[Theorem 5.14]{Vazquez-book} that a nonnegative solution~$\hat{\cl}$ exists for all times.  
Furthermore, we obtain from the definition of $\hat{\cl}$ in equation~\eqref{eq:solutionTranformed} that 
\begin{align}\label{eq:aprioriBounded}
 \hat{\cl}\geq0 \qquad \Equivalent \qquad \cl \leq \frac{N_Ak_bT}{2a}~.
\end{align}
Thus, we immediately have existence for all times and an a~priori upper bound for the solution~$\cl$. Together with \cref{lem:nonneg}, we obtain the $L^\infty$-bound 
\begin{align*}
 \cl \in \sqbrac{0, ~\frac{N_Ak_bT}{2a} } \qquad\text{for a.e. } t\in[0,T]~.
\end{align*}
\begin{remark}\label{rem:intrinsically_stable}
This bound proves that under the given initial values and boundary data from \ref{AssumpStart} and \eqref{eq:strongModel-b}, 
for the nonlinear diffusion coefficient~$D(\cl)$ from \eqref{eq:defNonlinDiffCoef} holds intrinsically~$D(\cl)\geq0$~. 
Hence, we inherently stay in the undersaturated regime, see \cref{subsec:Interpretation}, and no complications from $D(\cl)<0$ arise. \hfill$\square$
\end{remark}
\noindent\textbf{Connection to generalized porous medium equations: }
this time we introduce the nonlinear quadratic function
\begin{align*}
\Phi:\setR\rightarrow\setR; ~~~~\cl\mapsto\Phi(\cl):=\dl\cl-\dl a(N_Ak_bT)^{-1}\cl^2~.
\end{align*}
For the function $\Phi(\cl)$, we have $\grad\Phi(\cl)={\dl}[1-2a(N_Ak_BT)^{-1}\cl]\grad\cl$~.
Substituting $\Phi(c)$ into equation~\eqref{eq:massBalance-5}, we can rewrite equation~\eqref{eq:massBalance-5} as  
\begin{align*}
  \dert\cl - \Delta\Phi(\cl) = 0~.
\end{align*}
Equations of this type are considered as generalized porous medium equations. We recover classical porous medium equations by the choice of $\Phi(\cl):=\cl^m$ for $m\geq1$.
The polynomial functions $\cl\mapsto \cl^m$ are strictly monotone increasing for all $m\geq1$, if and only if $\cl\geq 0$. 
For this reason, the assumption of $\cl\geq0$ is ensured by assuming strictly increasing functions $\Phi$. 
Exactly for strictly monotone increasing functions~$\Phi$, we again find comprehensive analytical results in \cite{Vazquez-book}. 
In our case, we have 
\begin{align*}
  \Phi^\prime(c) = \dl\sqbrac{1 - \frac{2a}{N_Ak_bT}\cl}\cl ~\geq~0 \qquad\Equivalent\qquad   0 ~\leq~ \cl ~\leq~\frac{N_Ak_bT}{2a}~. 
\end{align*}
This condition on $\cl$ holds true due to equation~\eqref{eq:aprioriBounded}. Hence, we can again access the results in \cite{Vazquez-book} by the above transformation.   
\begin{remark}\label{rem:featuresOfThePME}
 The just proved connection of the model~\eqref{eq:strongModel-a}--\eqref{eq:strongModel-c} with the porous medium equation~\eqref{eq:strongModel-pme-a}--\eqref{eq:strongModel-pme-c} 
 guarantees, that besides existence and boundedness, the presented model inherits the features of the porous medium equation. In particular, it supports a finite speed of propagation, 
 self similar solutions and in case of solutions with compact support, we have free boundaries and waiting times. For further detailed explanations concerning this features, 
 we refer again to \cite[Chapter 1]{Vazquez-book}.  \hfill$\square$
\end{remark}
%
%%%%%%%%%%%%%%%%%%%%%%%%
%  SECTION Simulations %
%%%%%%%%%%%%%%%%%%%%%%%%
\section{Numerical simulations}\label{sec:Sim}
In this section, we solve the transformed model~\eqref{eq:strongModel-pme-a}--\eqref{eq:strongModel-pme-c}. 
This means, we solve a porous medium equation and we obtain the solution of equations~\eqref{eq:strongModel-a}--\eqref{eq:strongModel-c} 
in a post-processing step by using the transformation~\eqref{eq:solutionTranformed}. 
\par
Among many others, the porous medium equation was solved in the past with different numerical schemes, e.g., in \cite{rose_pme,nochetto_pme,pop_pme,arbogast_pme,zhang_pme, ebmeyer_pme}. 
In \cite{rose_pme,pop_pme,nochetto_pme}, the authors used regularization techniques to solve the porous medium equation. These regularization techniques consists in 
replacing the porous medium equation
\begin{align}\label{eq:pme}
 \dert\cl -(\cl\grad\cl) = 0
\end{align}
by the regularized version
\begin{align}\label{eq:pme-reg}
 \dert\cl -([\cl+\delta]\grad\cl) = 0 \qquad \text{for some } \delta >0~.
\end{align}
The regularized equation~\eqref{eq:pme-reg} is still a nonlinear equation. However, it is not a degenerated equation, 
since the nonlinear diffusion coefficient $\cl+\delta$ is bounded from below by $\delta>0$. Whereas, in the porous medium equation~\eqref{eq:pme}, 
we have for the nonlinear diffusion coefficient $\cl$ just $\cl\geq0$. This means the degenerated case $\cl=0$ is included. 
From the regularized equation~\eqref{eq:pme-reg}, the porous medium equation~\eqref{eq:pme} is recovered in the limit $\delta\searrow 0$.
%
%%%%%%%%%%%%%%%%%%%%%%%%%%%%
% SUBSECTION num. scheme   %
%%%%%%%%%%%%%%%%%%%%%%%%%%%%
\subsection{Numerical scheme}
We discretize the model~\eqref{eq:strongModel-pme-a}--\eqref{eq:strongModel-pme-c} in space by a Galerkin approach and the implicit Euler~scheme in time.
Thereby, we obtain a finite dimensional but still nonlinear problem. We solve this finite dimensional nonlinear problem, by a fixed point iteration. 
\medskip
\par\noindent
To present the algorithm, we now introduce some notation:
\begin{enumerate}[align=left, label=({N}\arabic*), ref=({N}\arabic*), start=6, itemsep=-1.2mm]
 \item \textbf{Discretization in time:} we decompose the time interval $[0,T)$ into $N$~closed subintervalls~$I_n$. This means, we have $[0,T]=\bigcup_{n=1}^N I_n$. 
       By setting $I_n:=[t_{n-1},t_n]$, we obtain a sequence of time points~$(t_n)_{n=0}^N$ and for this sequence, we assume $0=:t_0 < t_1 < \ldots < t_{N-1} < t_N := T$~.
       Furthermore, suppose an equidistant sequence in the sense that the time stepping length~$\tau$ is uniformly given by $\tau:=t_{n}-t_{n-1}$ for all $n=1,\ldots N$. 
 \item \textbf{Discretization in space:} let ~$\Omega\subset\setR^{n}$, $n=2,3$. We triangulate the domain~$\Omega$ by a family of meshes~$(\mathcal{T}_h)_{h>0}$. 
       Here, the fineness of the mesh is denoted by~$h>0$.
       The elements of a mesh~$\mathcal{T}_h$ are assumed to  be quadrilaterals, which are denoted by~$K_h$. Furthermore, we suppose that the domain~$\Omega$ is a convex polygon. 
       Hence, we can triangulate the domain~$\Omega$ such that we do not make any boundary approximation errors in the triangulation procedure, i.e., we have $\mathcal{T}_h=\bigcup K_h =\Omega$. 
 \item \textbf{Discrete ansatz space:} let $x\in\Omega$ be an $n$-dimensional point, i.e., we have $x=(x_1,\ldots,x_n)^\top$. For an element~$K\in \mathcal{T}_h$, 
       we define the space $Q_1(K)$ as the space of polynomials $q(x)$ that are linear in each $x_i$, cf.~\cite[Chapter 3]{Quarteroni-book}. 
       Finally, the discrete ansatz space~$\spaceTh$ is given by 
       $\spaceTh:=\cbrac{ \vphi_h \in C^0(\Omega): ~~\vphi_h|_K \in Q_1(K) ~~\forall~ K\in T_h ~~\text{and}~~ \vphi_h|_{\partial\Omega} =0}$. 
       Note that we have $\spaceTh\subset \spaceT$.
\end{enumerate}
Next, we present the algorithm, that we used to solve the porous medium equation.
\medskip
\par\noindent
\textbf{Nonlinear continuous problem: }
for the discretization of the porous medium equation, we consider the weak formulation of equations~\eqref{eq:strongModel-pme-a}--\eqref{eq:strongModel-pme-c}, i.e., 
we look at the equation
\begin{align}\label{eq:weakModel-transformed}
  \dualp{\dert\hat{\cl}}{\vphi}_{\Hk{1}{}^\ast\times\Hk{1}{}} + \scp{ \hat{\cl}\grad\hat{\cl}}{\grad\vphi}_{\Lp{2}{}} = 0 \qquad \forall~ \vphi \in \spaceT~.
\end{align}
\textbf{Nonlinear discrete problem: }
we discretize equation~\eqref{eq:weakModel-transformed} in time with Rothe's method, cf.~\cite{rektorys-book}. This semi-discretization in time is equivalent with a Banach space valued implicit Euler scheme. 
By using Rothe's method, we obtain a sequence of solutions~$(\hat{\cl}^n)_{n=0}^N$, which are defined by the sequence of elliptic problems:
\begin{align}\label{eq:pme-semidiscrete}
   & \text{1. Set: }c^{0} := \clstart, \qquad\clstart\text{  being the initial datum of } \eqref{eq:strongModel-pme-c} \nonumber\\
   & \text{2. For } n\in\cbrac{1,\ldots,N}, \text{ solve } \forall~ \vphi \in \spaceT  \nonumber\\ 
   & \scp{\hat{\cl}^n}{\vphi}_{\Lp{2}{}} + \tau\scp{ \hat{\cl}^{n}\grad\hat{\cl}^{n} }{\grad\vphi}_{\Lp{2}{}} 
     = \scp{ \hat{\cl}^{n-1} }{\vphi}_{\Lp{2}{}} ~.
\end{align}
The sequence of stationary functions~$(\hat{\cl}^n(x))_{n=0}^N$ is supposed to converge towards the time dependent solution~$\hat{\cl}(t,x)$ 
of equation~\eqref{eq:pme-semidiscrete} with $\tau\searrow 0$, cf. \cite[Chapter 7]{Roubicek-book}. 
\par
Next, we discretize in space by writing the equation~\eqref{eq:pme-semidiscrete} over $\spaceTh$ instead of $\spaceT$. 
This means, we now search the solution in the finite dimensional space~$\spaceTh$ instead of the space~$\spaceT$. To this end, we have to project the initial values~$\clstart$ into the space~$\spaceTh$,
e.g., by the $L^2$-projection~$\Pi_h$, cf. \cite[Chapter 3]{Chen-book}. This leads us to the fully discrete problems
\begin{align}\label{eq:pme-discrete}
  & \text{1. Set  }c^{0} := \Pi_h[\clstart] \nonumber\\
  & \text{2. For } n\in\cbrac{1,\ldots,N} \text{ solve } \forall~ \vphi_h \in \spaceTh \nonumber\\ 
  & \scp{\hat{\cl}^{n}_{h}}{\vphi_h}_{\Lp{2}{}} + \tau\scp{ \hat{\cl}_{h}^{n}\grad\hat{\cl}_{h}^{n} }{\grad\vphi_h}_{\Lp{2}{}} 
    = \scp{ \hat{\cl}_{h}^{n-1} }{\vphi_h}_{\Lp{2}{}} ~.
\end{align}
Note that we obtain for each time level~$t_n$ a sequence of solutions~$(\hat{\cl}^{n}_h)_{h>0}$ and this sequence of solutions is assumed to converge with $h\searrow0$ toward 
the $n$th solution~$\hat{\cl}^n$ of equation~\eqref{eq:pme-semidiscrete}.  
\medskip
\par\noindent
\textbf{Linear discrete problem: }
equations~\eqref{eq:pme-discrete} leads to a finite dimensional but still nonlinear equation systems. However, nonlinear equation systems are not directly solvable on computers 
and we have to iteratively solve these equation systems by a fixed point method. More precisely, we start a fixed point iteration at each time level~$n$. 
In the following, the iteration index~$k\in\setN$ denotes the current iteration step of the fixed point iteration, which consists in solving the sequence of linear problems:
\begin{align}\label{eq:pme-discrete-linear}
  & \text{1. Set  }c^{0} := \Pi_h[\clstart]~. \nonumber\\
  & \text{2.1 For } n\in\cbrac{1,\ldots,N}, \text{ set } c^{n,0}_h := c^{n-1}_h~. \nonumber\\[2.0mm] 
  & \text{2.2 For } k\in\setN, \text{ solve } \forall~ \vphi \in \spaceTh \nonumber\\ 
  & \scp{\hat{\cl}^{n,k}_{h}}{\vphi_h}_{\Lp{2}{}} + \tau\scp{ \hat{\cl}_{h}^{n,k-1}\grad\hat{\cl}_{h}^{n,k} }{\grad\vphi_h}_{\Lp{2}{}} 
     = \scp{ \hat{\cl}_{h}^{n-1} }{\vphi_h}_{\Lp{2}{}} ~.
\end{align}
In \cref{algo:pme}, we schematically present the resulting algorithm. 
\begin{algorithm}[ht]
\caption{ Solving the porous medium equation}\label{algo:pme}
\begin{algorithmic}[1]
\State set~~$time\_step\_size$~~and~~$end\_time$
\State set~~$iter\_step\_max$~~and~~$tol$
\State initialize vector~~$solution~\gets~$ initial values
\State initialize vector~~$solution\_old~\gets~solution$
\State initialize vector~~$solution\_iter~\gets~solution\_old$
\State set~~$time\_step=1$
\While{ $time\_step*time\_step\_size < end\_time$ } \Comment{time stepping loop}
  \State set~~$error=\infty$  
  \State set~~$iter\_step =1$ 
  \While{ $iter\_step \leq iter\_step\_max ~~\text{\textbf{and}} ~~error \geq tol$ } \Comment{fixed point iteration loop}
    \State compute~~$solution$ by solving equation~\eqref{eq:pme-discrete-linear}
    \State compute~~$error = \|solution -solution\_iter\|_2$
    \State update~~$solution\_iter ~\gets~solution$  
    \State increment~~$iter\_step ~\gets~ iter\_step + 1$ 
  \EndWhile
  \State update~~$solution\_old ~\gets~solution$  
  \State increment~~$time\_step ~\gets~ time\_step + 1$ 
\EndWhile 
\end{algorithmic}
\end{algorithm}
This fixed point iteration scheme returns for each $h$ and for each time level~$n$ a sequence of solutions $(\hat{\cl}^{n,k}_h)_{k\in\setN}$ 
and this sequence is assumed to converge with $k\rightarrow\infty$ toward the $n$th solution~$\hat{\cl}^{n}_{h}$ of equation~\eqref{eq:pme-discrete}. 
Formally, we recover equation~\eqref{eq:pme-discrete} from equation~\eqref{eq:pme-discrete-linear} by the observation that in the limit~$k\rightarrow\infty$, 
we have $\hat{c}^{n,k-1}_h=\hat{c}^{n,k}_h=\hat{c}^{n,\infty}_h$ and by setting $\hat{\cl}^n_h:=c^{n,\infty}_h$. 
%
%%%%%%%%%%%%%%%%%%%%%%%%%%%%%
% SUBSECTION Implementation %
%%%%%%%%%%%%%%%%%%%%%%%%%%%%%
\subsection{Implementation}\label{subsec:impl}
The implementations have been carried out within the deal.II library, cf. \cite{deal.II}. In our 2d-computations%
\footnote{We note that due to the dimension independent way of programming in deal.II, our implementations work also for 3d-computations. 
For ease of presentation, we just show the results of the 2d-dynamics, since the qualitative behavior of the 3d-dynamics are the same as for the 2d-dynamics}, 
we used the domain~$\Omega:=[0,1]\times[0,2]$ and we chose in the original model equations~\eqref{eq:strongModel-a}--\eqref{eq:strongModel-c} 
the parameters~$\dl=1$ and $a=2^{-1}N_Ak_bT$. Thus, we investigated the model 
\begin{subequations}\label{eq:model-computed}
\begin{align}
    \dert\cl + \grad\cdot ([1-\cl] \grad\cl)^2  &= 0                                                     &  \qquad\text{ in }  & \OmegaT,                \label{eq:model_computed-a}\\
                                          \cl   &= 0                                                     &  \qquad\text{ on }  & \partial\OmegaT,        \label{eq:model_computed-b}\\
                                        \cl(0)  &= \clstart \text{ from } \eqref{eq:model_init_values}   &  \qquad\text{ on }  & \Omega\times\cbrac{0}.  \label{eq:model_computed-c}
\end{align} 
\end{subequations}
We supplemented this equations with the rough initial values, see \cref{fig:initial_solution},
\begin{align}\label{eq:model_init_values}
 \clstart(x_1,x_2):= \begin{cases}
                         1 & \text{ for }~ (0.25 \leq x_1 \leq 0.75) ~~\wedge~~ (0.5 \leq x_2 \leq 1.5)~, \\
                         0 & \text{ else }~.
                     \end{cases} 
\end{align}
However, as already mentioned before, we did not solved the above model equations. Instead, we used the simplified transformation~\eqref{eq:solutionTranformed}
\begin{align}\label{eq:solutionTranformed-simple}
 \hat{\cl} = 1-\cl~.
\end{align}
Hereby, we obtained the porous medium equation
\begin{subequations}\label{eq:pme-computed}
\begin{align}
    \dert\hat{\cl} + \grad\cdot (\hat{\cl}\grad\hat{\cl})    &= 0                                                      &  \qquad\text{ in }  & \OmegaT,                \label{eq:pme_computed-a}\\
                                                  \hat{\cl}  &= 1                                                      &  \qquad\text{ on }  & \partial\OmegaT,        \label{eq:pme_computed-b}\\
                                               \hat{\cl}(0)  &= \hat{\cl}_0 \text{ from } \eqref{eq:pme_init_values}   &  \qquad\text{ on }  & \Omega\times\cbrac{0}.  \label{eq:pme_computed-c}
\end{align} 
\end{subequations}
By using the transformation~\eqref{eq:solutionTranformed-simple}, we arrived at the initial values, see \cref{fig:initial_solution}, 
\begin{align}\label{eq:pme_init_values}
 \hat{\cl}_0(x_1,x_2):= \begin{cases}
                         0 & \text{ for }~ (0.25 \leq x_1 \leq 0.75) ~~\wedge~~ (0.5 \leq x_2 \leq 1.5)~, \\
                         1 & \text{ else }~.
                       \end{cases} 
\end{align} 
In the computations, we solved the equations~\eqref{eq:pme_computed-a}--\eqref{eq:pme_computed-c} for $\cl$ with \cref{algo:pme} and by the transformation~\eqref{eq:solutionTranformed-simple}, 
we obtained the solution~$\cl$ of equations~\eqref{eq:model_computed-a}--\eqref{eq:model_computed-c}.
\begin{figure}[ht]
  \begin{center}
    \includegraphics*[bb=30 100 1300 580, scale=0.25]{./../pictures/initial_solution.eps}
  \end{center}
  \vspace{-4mm}
 \caption{Graph of the initial values $\clstart$ from~\eqref{eq:model_init_values} (left) and $\hat{\cl}_0$ from~\eqref{eq:pme_init_values} (right)}\label{fig:initial_solution}
\end{figure} 
The above setting for our numerical simulations was motivated from the observation, that in case of boundary condition~\eqref{eq:strongModel-b} and $\fieldFlow=\vec{0}$ for both, 
the standard diffusion equation~\eqref{eq:massBalance-4} and the extended model~\eqref{eq:strongModel-a}, the solutions reach in the long time limit a homogeneous state at 
the given constant boundary value.% 
\footnote{For the extended model~\eqref{eq:massBalance-5}, this is true in case where the nonlinear diffusion coefficient~$D(\cl)$ from equation~\eqref{eq:defNonlinDiffCoef} remains nonnegative, % 
          see \cref{subsec:pme} and \cref{rem:intrinsically_stable}.}
However, we showed in \cref{subsec:ClassicalModel} that the standard diffusion equations~\eqref{eq:massBalance-4} is based on the assumption of noninteracting particles. 
In contrast, we included attractive \vdW\ interactions in the extended equation~\eqref{eq:massBalance-5}. 
These attractive \vdW\ interactions keep the particles of the concentration~$\cl$ together and thus slow down the propagation speed compared to the noninteracting model. 
This difference in propagation speed can be analytically validated, since for the standard diffusion equation~\eqref{eq:massBalance-4} it is known, 
that information propagates with infinite speed, cf.~\cite[Chapter~2]{Evans-book}. Whereas one characteristic feature of the porous medium equation is 
the finite propagation speed, see \cref{rem:featuresOfThePME}. In summary, the attractive \vdW\ forces become visible in the model~\eqref{eq:massBalance-5} in a change of propagation speed. 
\par
In order to validate this difference in propagation speed numerically, we chose for $\cl$ the homogeneous boundary condition~\eqref{eq:model_computed-b} 
and the initial value~$\clstart$ from equation~\eqref{eq:model_init_values}. Thus, the dynamics in our simulations were solely induced by the
initial solution profile. As already mentioned above, we computed the solution~$\cl$ of~\eqref{eq:model_computed-a}--\eqref{eq:model_computed-c} 
by solving the equations~\eqref{eq:pme_computed-a}--\eqref{eq:pme_computed-c} and using the transformation~\eqref{eq:solutionTranformed-simple}. 
We compared this solution with the solution of the standard diffusion equation~(set $\fieldFlow=\vec{0}$ and $d=1$ in equation~\eqref{eq:massBalance-4} )  
\begin{align}\label{eq:heat}
 \dert\tilde{\cl} -\Delta\tilde{\cl} = 0
\end{align}
to identical initial values and boundary values~\eqref{eq:model_computed-b}, \eqref{eq:model_computed-c}.  Furthermore, to guarantee comparability, 
we solved~$\tilde{\cl}$ exactly in the same way, as we did for~$\cl$. This means, we used the transformation~\eqref{eq:solutionTranformed-simple} 
and solved the transformed equation for $(1-\tilde{\cl})$ to initial values and boundary values~\eqref{eq:pme_computed-b}, \eqref{eq:pme_computed-c}. 
However, the transformation~\eqref{eq:solutionTranformed-simple} was the identity in this case, i.e., 
\begin{align*}
 \dert(1-\tilde{\cl}) -\Delta(1-\tilde{\cl}) = 0 \qquad\Equivalent\qquad \dert\tilde{\cl} -\Delta\tilde{\cl} = 0~.
\end{align*}
For a given mesh size~$h=2^{-7}$, we ran our simulations over~$600$~time steps. Thereby, we used the time steps size~$\tau:=10^{-4}$. 
The reason for this small time step size~$\tau$ was to ensure convergence of the fixed point method.
More precisely, to guarantee convergence, the fixed point iteration must satisfy the so-called contraction property, 
cf.~\cite[Chapter~9.2]{Evans-book}, and for parabolic equations the contraction property holds solely for sufficiently small 
time step sizes~$\tau$, cf. \cite[Chapter~9.2, Theorem~2]{Evans-book}. Furthermore, the condition number of the elliptic part of 
the system matrix associated with equation~\eqref{eq:pme-discrete-linear} increases with~$O(h^{-2})$, cf.~\cite[Chapter~3]{Knabner-FE-book}. 
Hence, the smaller the mesh size~$h$, the worser gets the condition number. However, equation~\eqref{eq:pme-discrete-linear} shows that at the same moment, 
the condition number decreases with~$O(\tau)$. Consequently, we chose a small time step size to guarantee both, 
the contraction property and a reasonable condition number. 
\par
Next, we set in \cref{algo:pme} the maximal number of iteration steps to~$40$ and the tolerance to~$10^{-8}$. 
Finally, we solved the resulting linear equation systems with the build-in version of the sparse direct solver UMFPACK, cf. \cite{umfpack}. 
\par
As already mentioned before, we did not need any regularization in \cref{algo:pme}. Hence, we were able to investigate the true behavior of the solution~$\cl$.
Finally, we clearly observed the desired difference in propagation speed of the solutions~$\cl$ of~\eqref{eq:pme_computed-a}--\eqref{eq:pme_computed-c} and $\tilde{\cl}$
of~\eqref{eq:heat}, \eqref{eq:pme_computed-b},\eqref{eq:pme_computed-c} in our simulations, 
see \cref{fig:solution_200,fig:solution_400,fig:solution_600} below. This validated numerically the attractive \vdW\ forces in the model equation~\eqref{eq:strongModel-a}.   
\begin{figure}[H]
  \begin{center}
    \includegraphics*[bb=150 40 1200 530, scale=0.25]{./../pictures/solution_pme_heat_200.eps}
  \end{center}
  \vspace{-4mm}
 \caption{Graph of $\cl$ (left) and $\tilde{\cl}$ (right) after $200$ time steps}\label{fig:solution_200}
\end{figure} 
\begin{figure}[H]
  \vspace{-4.0mm}
  \begin{center}
    \includegraphics*[bb=150 40 1200 500, scale=0.25]{./../pictures/solution_pme_heat_400.eps}
  \end{center}
  \vspace{-4mm}
 \caption{Graph of $\cl$ (left) and $\tilde{\cl}$ (right) after $400$ time steps}\label{fig:solution_400}
\end{figure}    
\begin{figure}[H]
  \begin{center}
    \includegraphics*[bb=150 40 1200 500, scale=0.25]{./../pictures/solution_pme_heat_600.eps}
  \end{center}
  \vspace{-4mm}
 \caption{Graph of $\cl$ (left) and $\tilde{\cl}$ (right) after $600$ time steps}\label{fig:solution_600}
\end{figure}   
\medskip
\par
In a second numerical experiment, we investigated the case of dominant cohesion forces. In \cref{subsec:Interpretation}, 
we pointed out that this happens in case of a negative nonlinear diffusion coefficient~$D(\cl)$, with $D(\cl)$ from equation~\eqref{eq:defNonlinDiffCoef}. 
Although the mathematical model does not longer possess a weak solution in the sense of \cref{def:WeakSolution}, 
we were able to compute numerically reasonable aggregation kinetics. To this end, we used the same computational domain as before and we solved the 
model equations~\eqref{eq:model_computed-a}--\eqref{eq:model_computed-c} together with the initial condition, see \cref{fig:solution_aggl}
\begin{align}\label{eq:model_init_values_aggl}
 \clstart(x_1,x_2):= \begin{cases}
                       1.5 & \text{ for }~ (0.4 \leq x_1 \leq 0.6)~\wedge~(0.75 \leq x_2 \leq 1.0) \\
                         1 & \text{ for }~ (0.25 \leq x_1 \leq 0.4)~\wedge~(0.6 \leq x_1 \leq 0.75)~  \\
                           & ~~~~~~~~~~       \wedge~(0.5 \leq x_2 \leq 0.75)~\wedge~( 1.0\leq x_2 \leq1.5)~, \\
                         0 & \text{ else }~.
                     \end{cases} 
\end{align}
We used the same mesh size~$h=2^{-7}$, but we chose the time step size~$\tau=10^{-6.5}$ in order to account for the negative eigenvalues. 
Our simulations ran stable over~$150$~time steps, before oscillations occurred. We expected these oscillations since aggregation leads to 
mass clustering and thus to a blow-up in the computed solution. Within the first $150$~time steps, the computations illustrated this blow-up kinetics 
and furthermore revealed that the aggregation kinetics completely froze the propagation of the support~$S(t):=\cbrac{x\in\Omega:~~\cl(t,x)>0}$ 
of the solution, see~\cref{fig:solution_aggl} below.   
\begin{figure}[H]
  \begin{center}
    \includegraphics*[bb=05 100 1500 870, scale=0.20]{./../pictures/solution_aggl_merge.eps}
  \end{center}
  \vspace{-4mm}
 \caption{Graph of the solution~$\cl$ of~equations~\eqref{eq:model_computed-a},~\eqref{eq:model_computed-b},~\eqref{eq:model_init_values_aggl} 
          after $0$ time steps (left) and after $100$ time steps (right)}\label{fig:solution_aggl}
\end{figure} 
%
%%%%%%%%%%%%%%%%%%%%%%%
%  SECTION Conclusion %
%%%%%%%%%%%%%%%%%%%%%%%
\section{Conclusion}
We deduced the nonlinear, degenerated diffusion--convection equation~\eqref{eq:massBalance-5}, by substituting the pressure according to 
\vdW\ equation of state. Thereby, we incorporated cohesion forces, that originated from ever present \vdW\ forces.  
We demonstrated that the resulting nonlinear and degenerated diffusion equation turned out to be a suitable model for slow perikinetic coagulation, 
see~\cref{subsec:Interpretation}, and we were able to transform the model to a porous medium equation, see \cref{subsec:pme}. 
Thereby we illustrated a new application for porous medium equations. 
Next, we revealed that the presented model possess a weak solution in the sense of \cref{def:WeakSolution} solely in case of a nonlinear diffusion coefficient~$D(\cl)\geq0$, 
where $D(\cl)$ is given by equation~\eqref{eq:defNonlinDiffCoef}. In \cref{subsec:pme}, we showed that this is guaranteed, in case the model is given by 
equations~\eqref{eq:strongModel-a}--\eqref{eq:strongModel-c} and assumptions~\ref{AssumpCoeff}, \ref{AssumpStart} hold. 
\par
Furthermore, we used \cref{algo:pme} for solving the resulting porous medium equation without any further regularization. 
Thus, the computed solutions reflect the true degenerate character of the model equations~\eqref{eq:strongModel-a}--\eqref{eq:strongModel-c}.
Finally, in our numerical simulations we were able to validate the attractive \vdW\ forces in the model in both cases, 
the dominant cohesion case and the dominant Fickian diffusion case, see \cref{subsec:Interpretation,subsec:impl} for details.
\medskip
\par\noindent
%
%%%%%%%%%%%%%%%%%%%%%%%%%%%%
%  SECTION Acknowledgement %
%%%%%%%%%%%%%%%%%%%%%%%%%%%%
\textbf{Acknowledgements}~~
We would like to thank N.~Ray and F.~Frank for carefully reading early versions of this manuscript and for giving constructive comments 
which helped us to improve the quality. 
\par
M. Herz is supported by the Elite Network of Bavaria. 
%
%%%%%%%%%%%%%%%%%%%%%%%%%%%%
%  REFERENCES              %
%%%%%%%%%%%%%%%%%%%%%%%%%%%%
\addcontentsline{toc}{section}{References}  %add "References" to contents
\printbibliography
\end{document}